\newtheorem{theorem}{Theorem}
\newtheorem{remark}{Remark}
\begin{document}

\title{A Parareal algorithm without Coarse Propagator?}

\author{Martin J. Gander\thanks{Université de Gen\`eve, martin.gander@unige.ch}
\and Mario Ohlberger\thanks{University of Münster, mario.ohlberger@uni-muenster.de}
\and Stephan Rave\thanks{University of Münster, stephan.rave@uni-muenster.de}}

\date{September 4, 2024}

\maketitle

\abstract{The Parareal algorithm was invented in 2001 in order to
  parallelize the solution of evolution problems in the time
  direction. It is based on parallel fine time propagators called F
  and sequential coarse time propagators called G, which alternatingly
  solve the evolution problem and iteratively converge to the fine
  solution. The coarse propagator G is a very important component of
  Parareal, as one sees in the convergence analyses. We present here
  for the first time a Parareal algorithm without coarse propagator,
  and explain why this can work very well for parabolic problems. We
  give a new convergence proof for coarse propagators approximating in
  space, in contrast to the more classical coarse propagators which
  are approximations in time, and our proof also applies in the
  absence of the coarse propagator. We illustrate our theoretical
  results with numerical experiments, and also explain why this
  approach can not work for hyperbolic problems.}

\section{Introduction}

The Parareal algorithm was introduced in \cite{lions2001resolution} as
a non-intrusive way to parallelize time stepping methods in the time
direction for solving partial differential equations. Its convergence
is now well understood: for linear problems of parabolic type,
Parareal converges superlinearly on bounded time intervals, and
satisfies a linear convergence bound on unbounded time intervals
\cite{gander2007analysis}, which means it is a contraction even if
computations are performed on arbitrary long time intervals. It was
shown in the same reference also that for hyperbolic problems, the
linear convergence estimate over long time does not predict
contraction, and the superlinear estimate only indicates contraction
when there are already too many iterations performed and one is
approaching the finite step convergence property of Parareal. A
non-linear convergence analysis for Parareal can be found in
\cite{gander2008nonlinear}, and also the performance of Parareal for
Hamiltonian problems is well understood, see
\cite{gander2014analysis}, where also a derivative variant of Parareal
was proposed.

An essential ingredient in the Parareal algorithm is the coarse
propagator, and it is its accuracy that has a decisive influence on
the convergence of Parareal, as one can see in the estimates from
\cite{gander2007analysis,gander2008nonlinear}. We consider here a
Parareal algorithm in the standard form for solving an evolution
problem $\partial_t u={\cal F}(u,t)$, and time partition
$0=T_0<T_1<\ldots<T_N=T$,
\begin{equation}\label{Parareal}
  U_{n+1}^{k+1}=F(U_n^k,T_n,T_{n+1})+G(U_n^{k+1},T_n,T_{n+1})-G(U_n^k,T_n,T_{n+1}),
\end{equation}
where the fine solver $F(U_n^k,T_n,T_{n+1})$ and the coarse solver
$G(U_n^k,T_n,T_{n+1})$ solve the underlying evolution problem on time
intervals $\Delta T:=T_{n+1}-T_n$,
\begin{equation}
  \partial_t u={\cal F}(u,t),\quad u(T_n)=U_n^k,
\end{equation}
with different accuracy. We are interested in understanding what
happens when we remove the coarse solver in the Parareal algorithm
\eqref{Parareal}, i.e.  we run instead the iteration
\begin{equation}
  U_{n+1}^{k+1}=F(U_n^k,T_n,T_{n+1}).
\end{equation}
Note that this is quite different from the Identity Parareal algorithm
\cite{takami2014identity}, where the coarse propagator was replaced by
the identity, a very coarse approximation, while here we remove the
coarse propagator altogether.

\section{A Numerical Experiment}

We start with a numerical experiment for the one dimensional heat
equation on the spatial domain $\Omega:=(0,1)$ and the time interval $(0,T]$,
\begin{equation}\label{HeatEquation1D}
  \partial_t u=\partial_{xx}u+f,\quad u(x,0)=u_0(x),\quad u(0,t)=u(1,t)=0.
\end{equation}
We show in Figure \ref{PararealFigWithoutCoarse}
\begin{figure}[t]
  \centering
  \mbox{\includegraphics[width=0.42\textwidth]{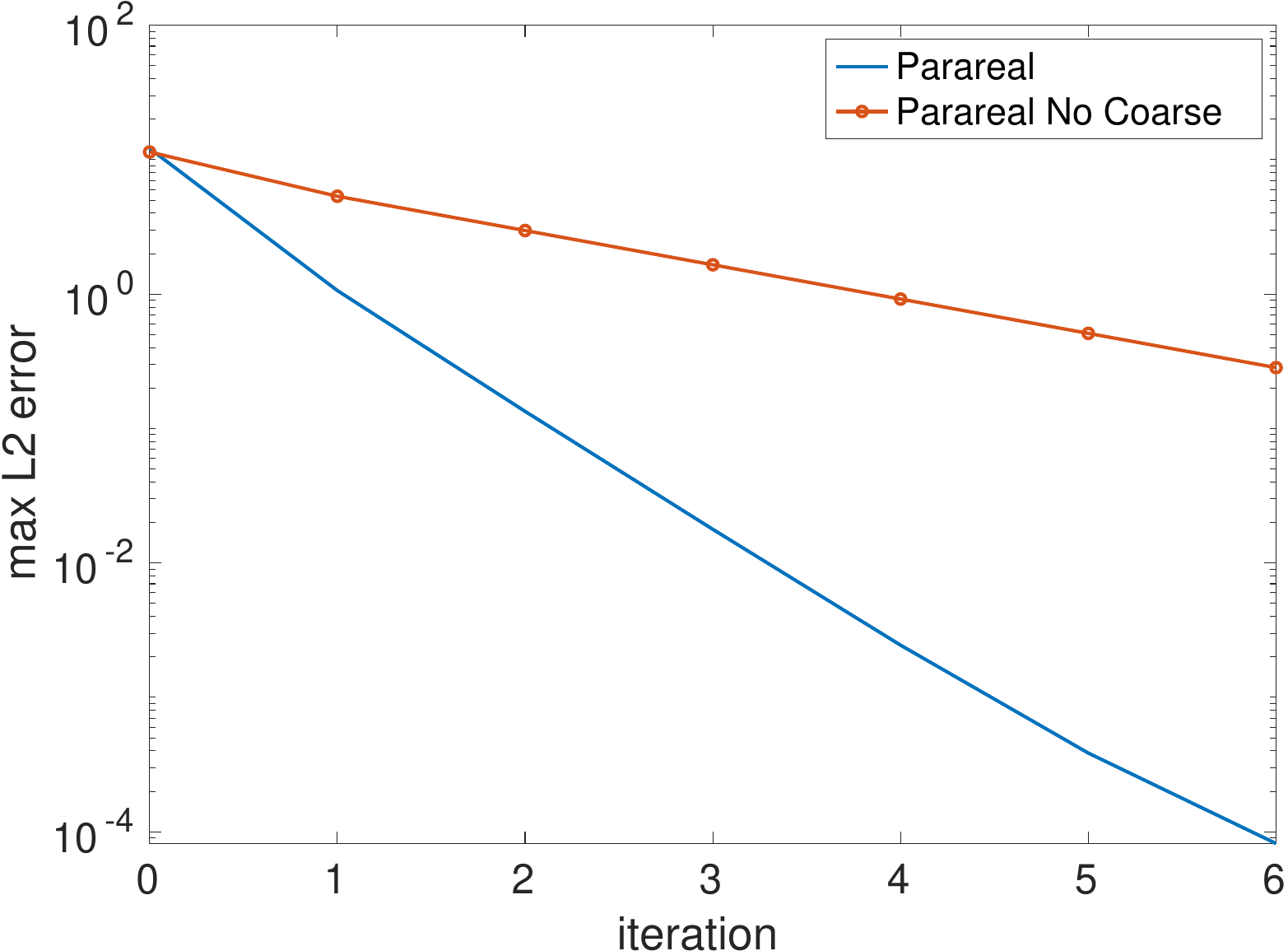}\qquad
    \includegraphics[width=0.42\textwidth]{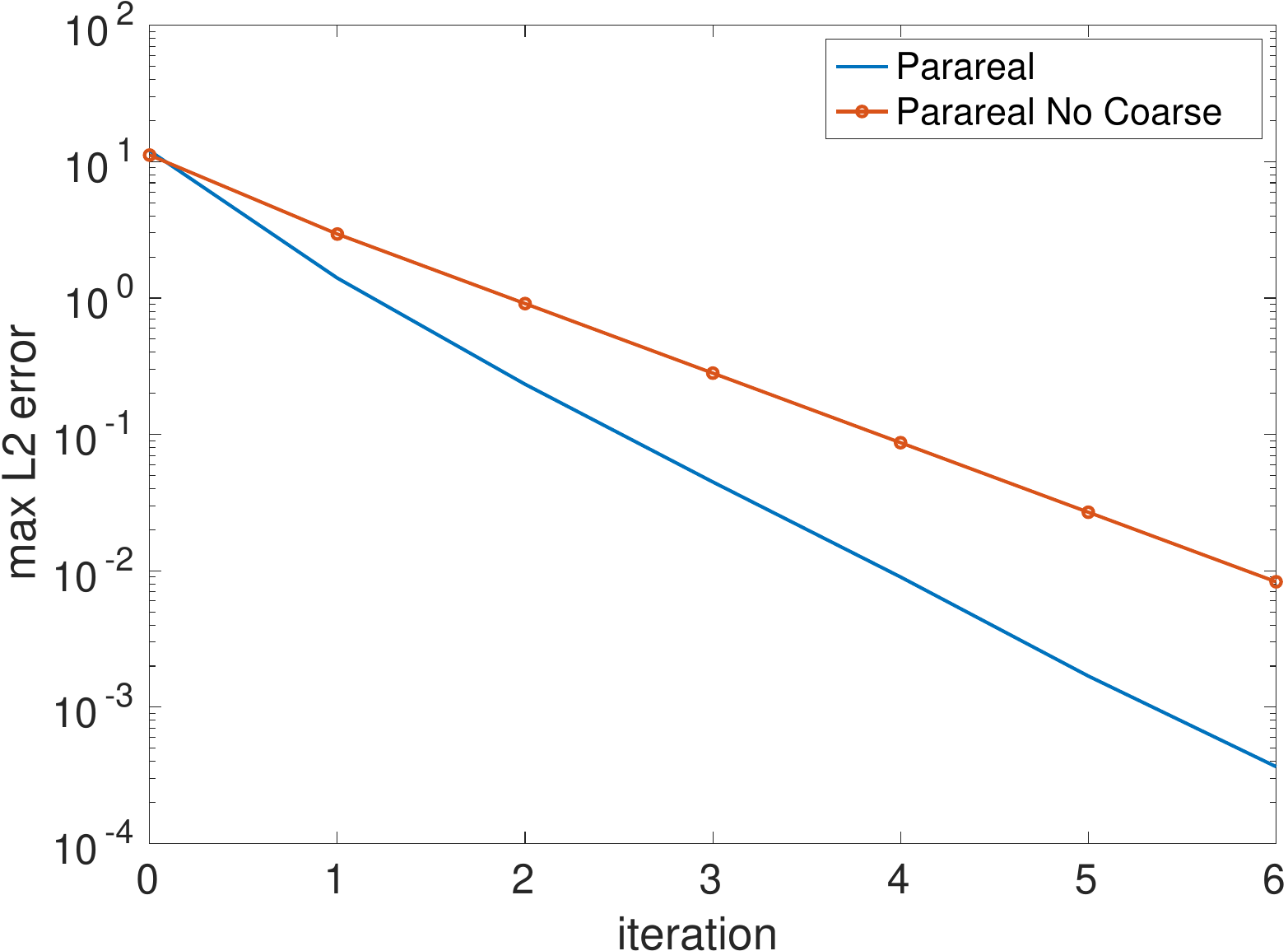}}
  \mbox{\includegraphics[width=0.42\textwidth]{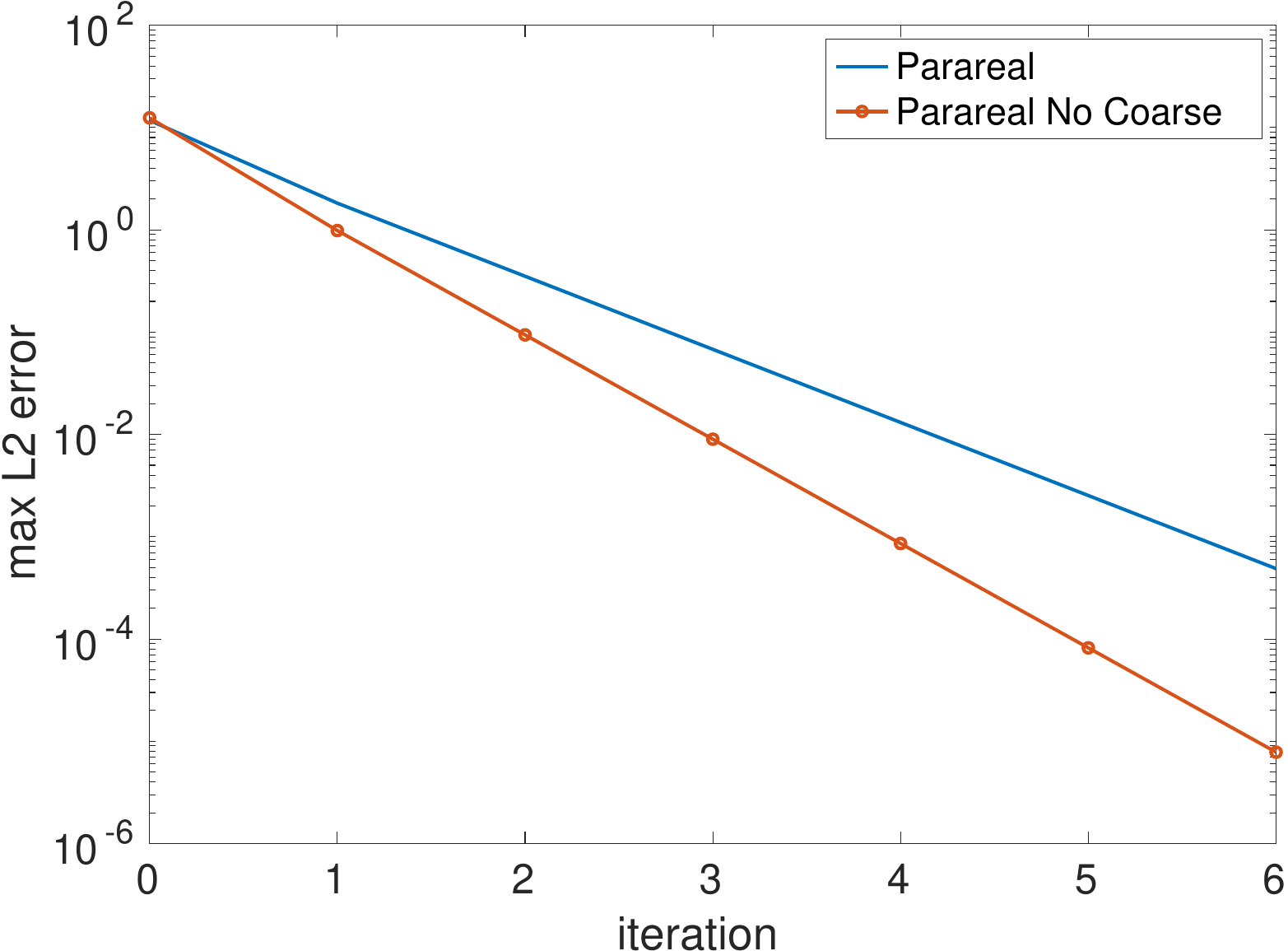}\qquad
    \includegraphics[width=0.42\textwidth]{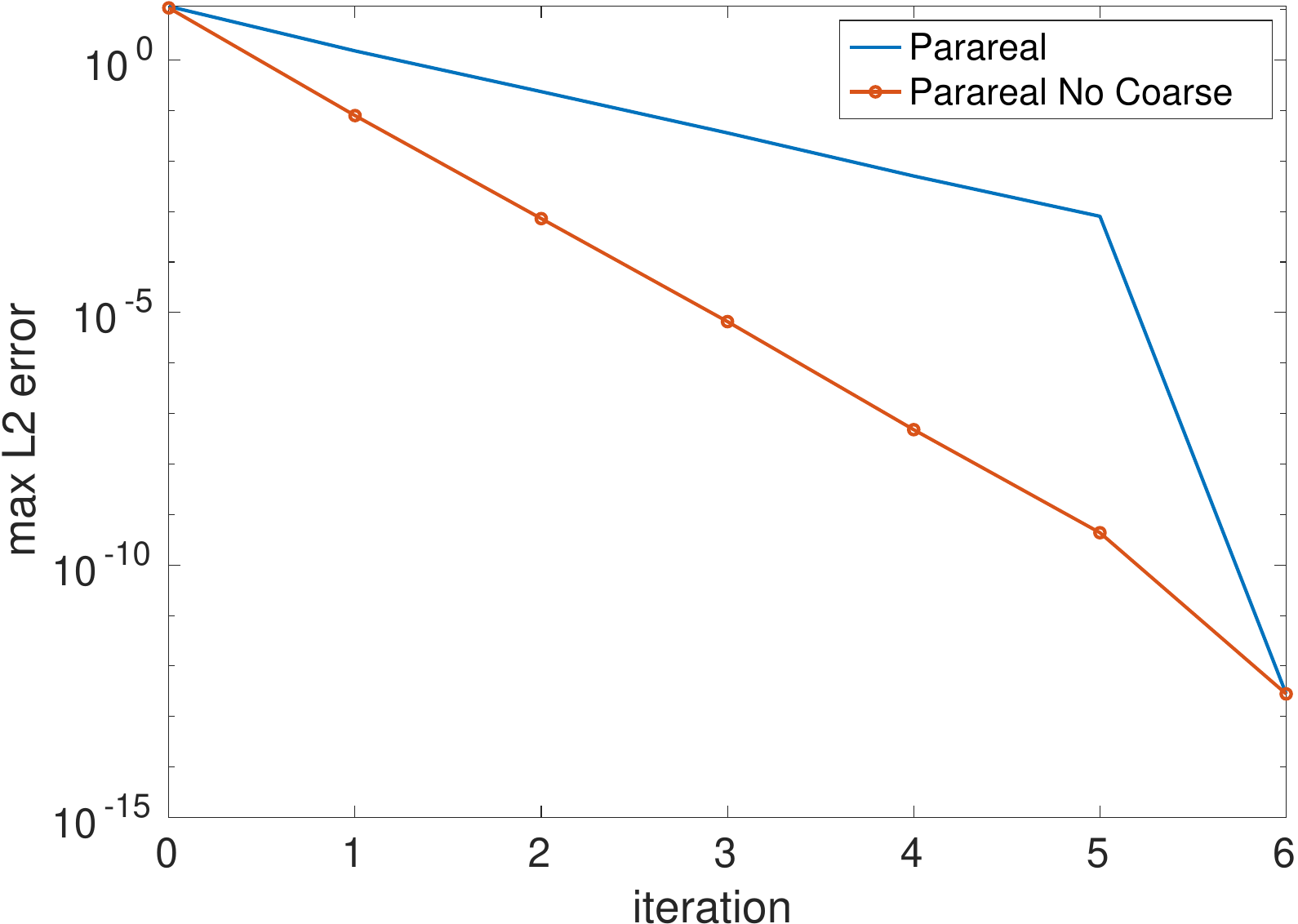}}
  \caption{Running a classical Parareal algorithm with and without
    coarse propagator.  From top left to bottom right: 48, 24, 12, and 6
    coarse time intervals with $T=3$.}
  \label{PararealFigWithoutCoarse}
\end{figure}
the results obtained with zero initial conditions, $u_0(x)=0$, and the
source term
\begin{equation}
  f(x,t)=10e^{-100(x-0.5)^2}\sum_{j=1}^4e^{-100(t-t_j)^2},\quad
  t_j=0.1,0.6,1.35,1.85,
\end{equation}
like a heating device which is turned on and of at specific time
instances. We discretized the heat equation \eqref{HeatEquation1D}
using a centered finite difference discretization in space with mesh
size $\Delta x=\frac{1}{128}$, and Backward Euler in time with time
step $\Delta t=\frac{1}{96}$, and run the Parareal algorithm up to
$T=3$ with various numbers $N$ of coarse time intervals, where the
coarse propagator $G$ just does one big Backward Euler step.  We see
two really interesting results: first, the Parareal algorithm without
coarse propagator is actually also converging when applied to the heat
equation, both methods are a contraction. And second, if there are not
many coarse time intervals, i.e. if the coarse time interval length
$\Delta T$ is becoming large, the Parareal algorithm without coarse
correction is converging even faster than with coarse correction!

In order to test this surprising property further, we repeat the
above experiment, but use now homogeneous Neumann boundary conditions
in \eqref{HeatEquation1D}, $\partial_xu(0,t)=\partial_xu(1,t)=0$,
instead of the Dirichlet conditions. We show the corresponding results
in Figure \ref{PararealFigWithoutCoarseNeumann}.
\begin{figure}[t]
  \centering
  \mbox{\includegraphics[width=0.42\textwidth,height=0.31\textwidth]{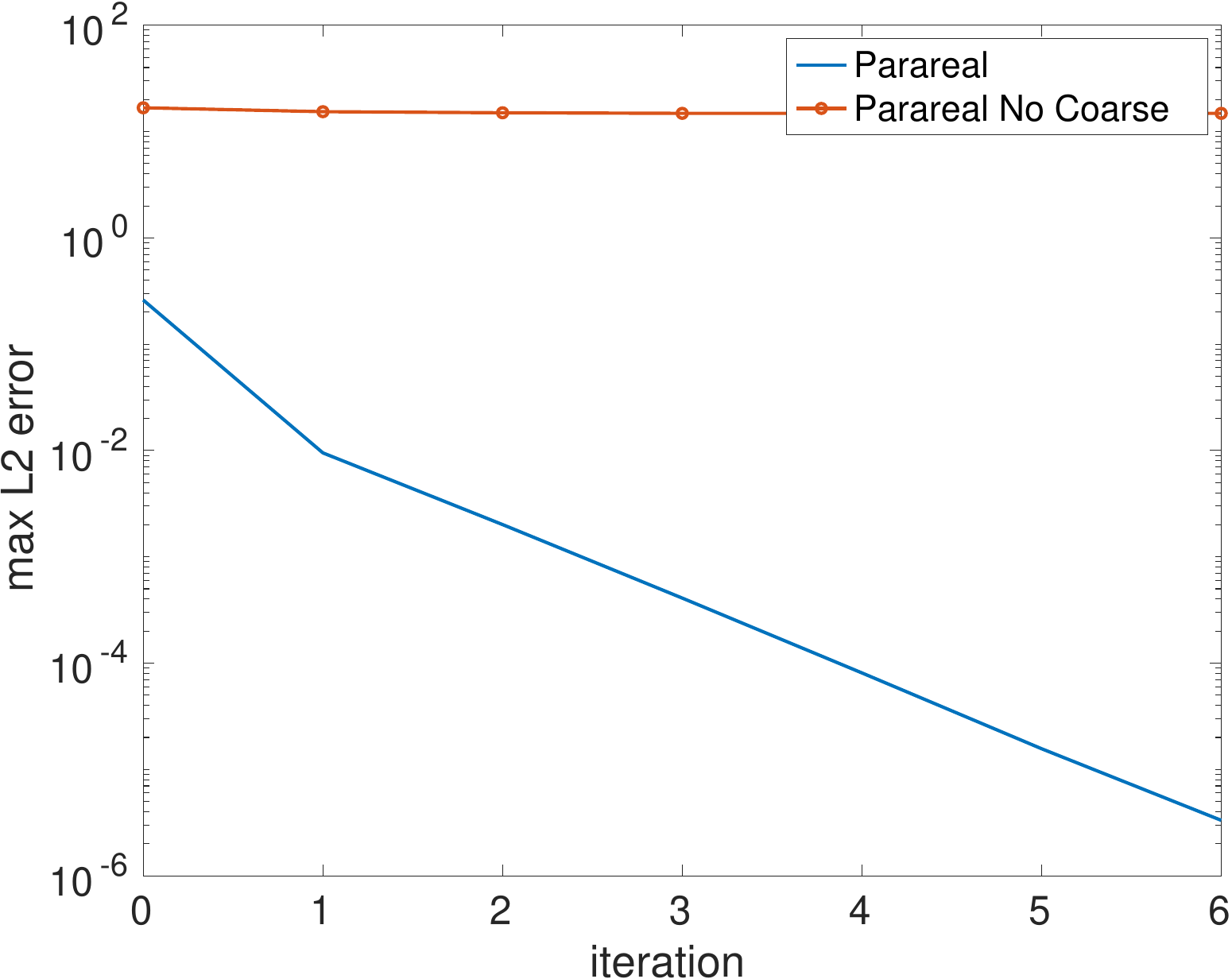}\qquad
    \includegraphics[width=0.42\textwidth,height=0.31\textwidth]{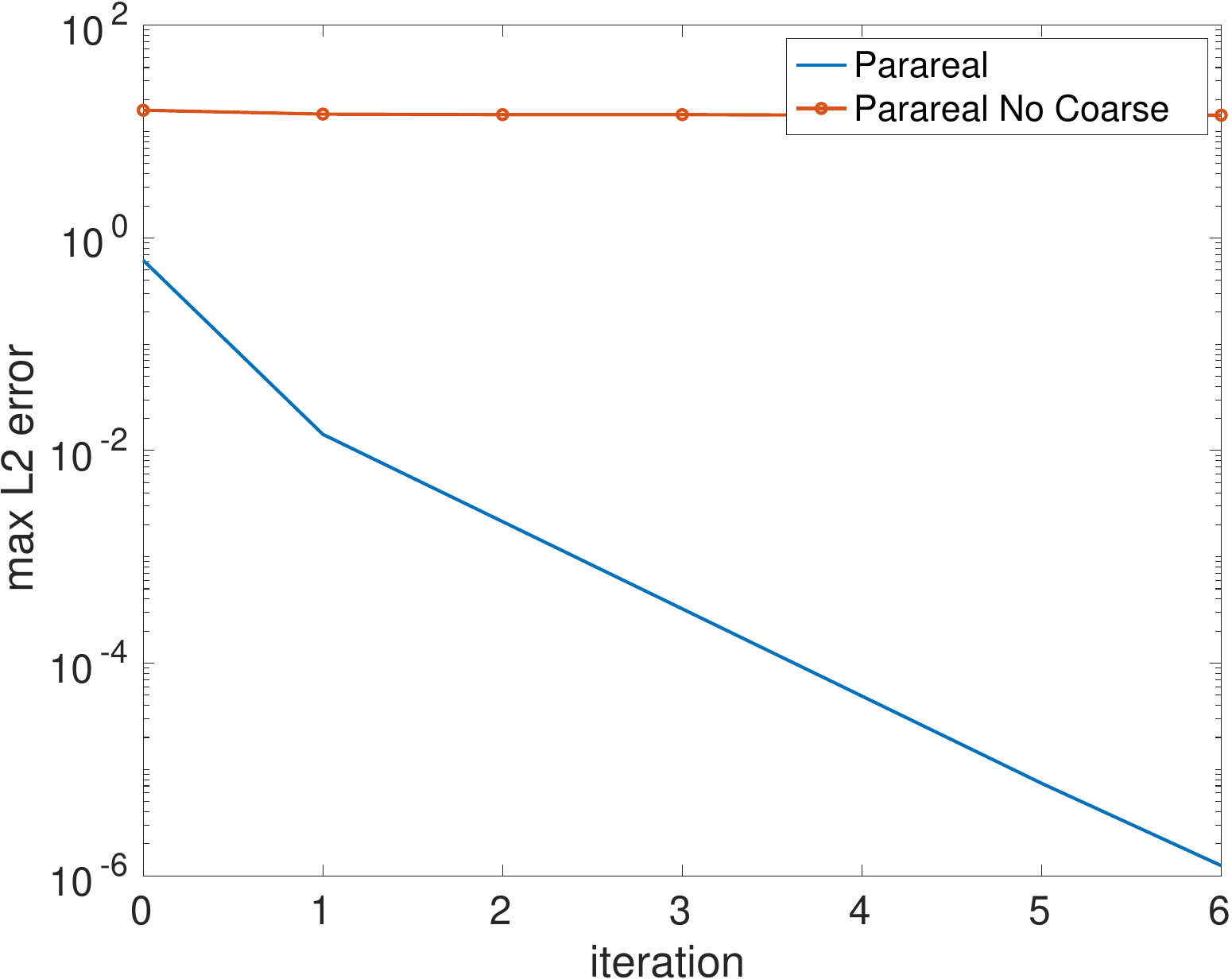}}
  \mbox{\includegraphics[width=0.42\textwidth,height=0.31\textwidth]{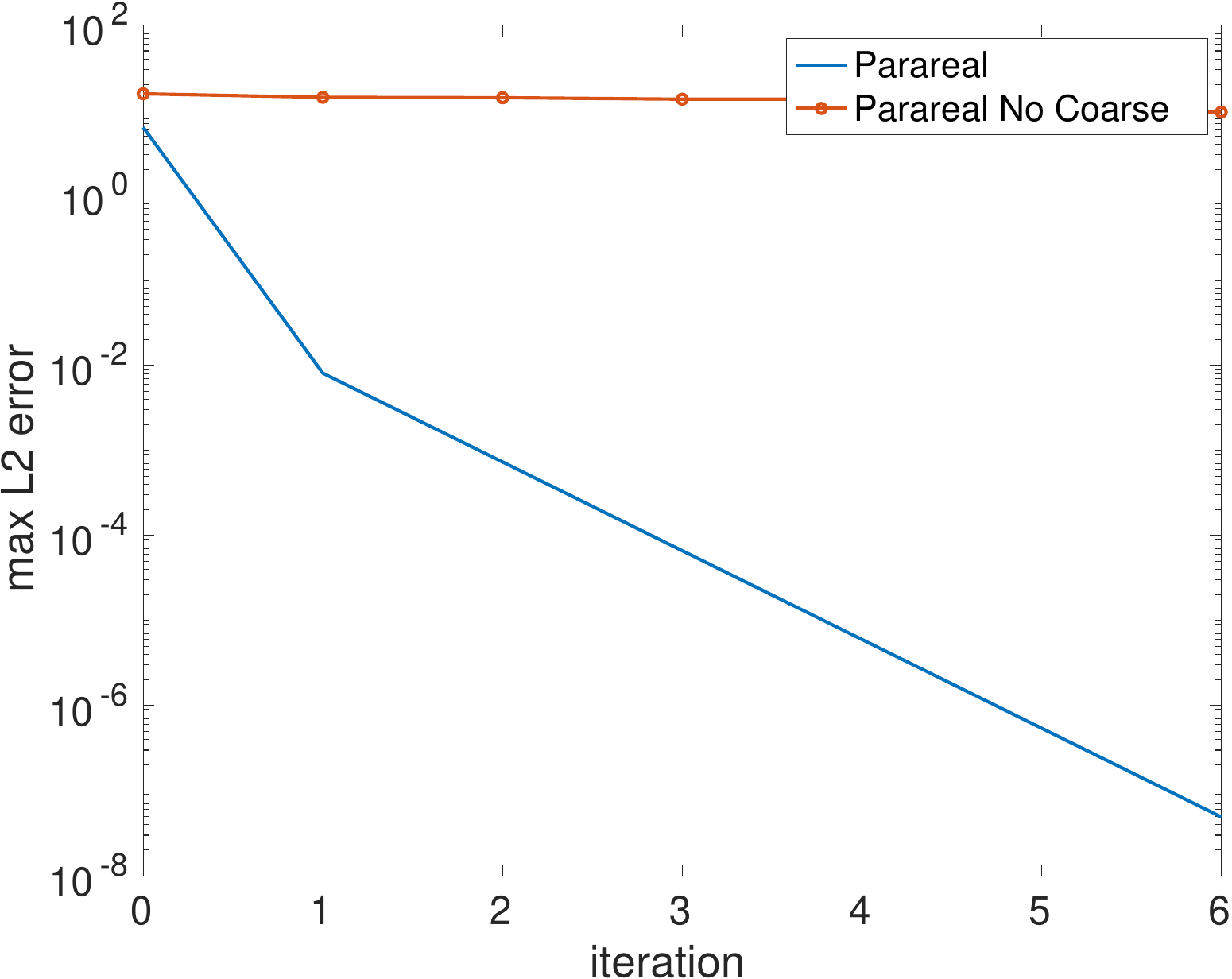}\qquad
    \includegraphics[width=0.42\textwidth,height=0.31\textwidth]{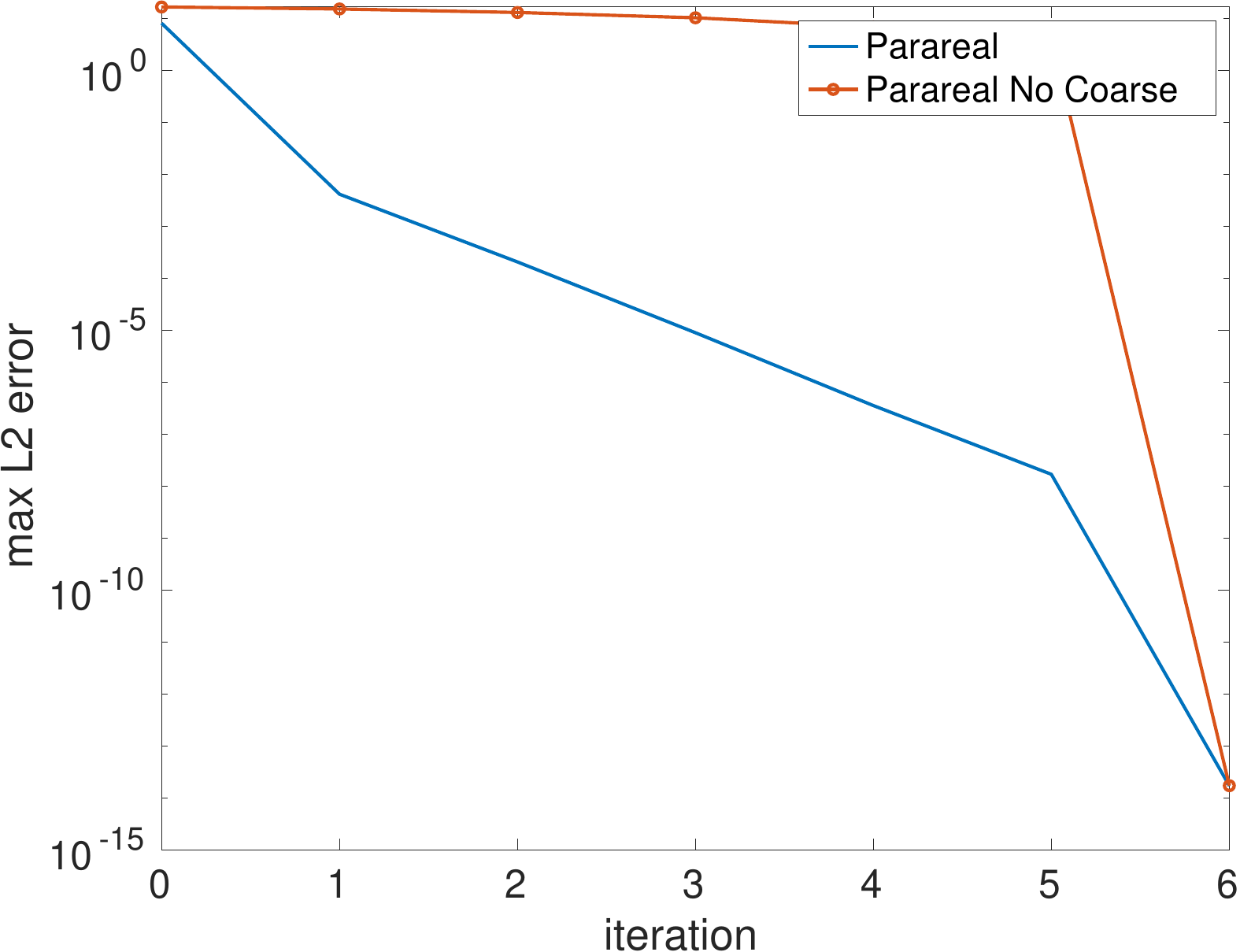}}
  \caption{Classical Parareal with and without coarse propagator with
    zero Neumann boundary conditions.  From top left to bottom right:
    48, 24, 12, and 6 coarse time intervals and $T=3$.}
  \label{PararealFigWithoutCoarseNeumann}
\end{figure}
We see that while the Parareal algorithm with coarse propagator is
converging as before with Dirichlet conditions, the Parareal algorithm
without coarse propagator is not converging any more, except in the
last case when only 6 coarse time intervals are used, and here it is
the finite step convergence property of the Parareal algorithm when
iterating 6 times that leads to convergence, there is no contraction
without the coarse propagator with Neumann boundary conditions.

It is very helpful at this point to take a closer look at the
solutions we were computing in these two experiments for zero
Dirichlet and zero Neumann conditions, shown in Figure \ref{SolFigs}
in the two leftmost panels.
\begin{figure}[t]
  \centering
  \mbox{\includegraphics[width=0.19\textwidth,trim=300 30 310
      30,clip]{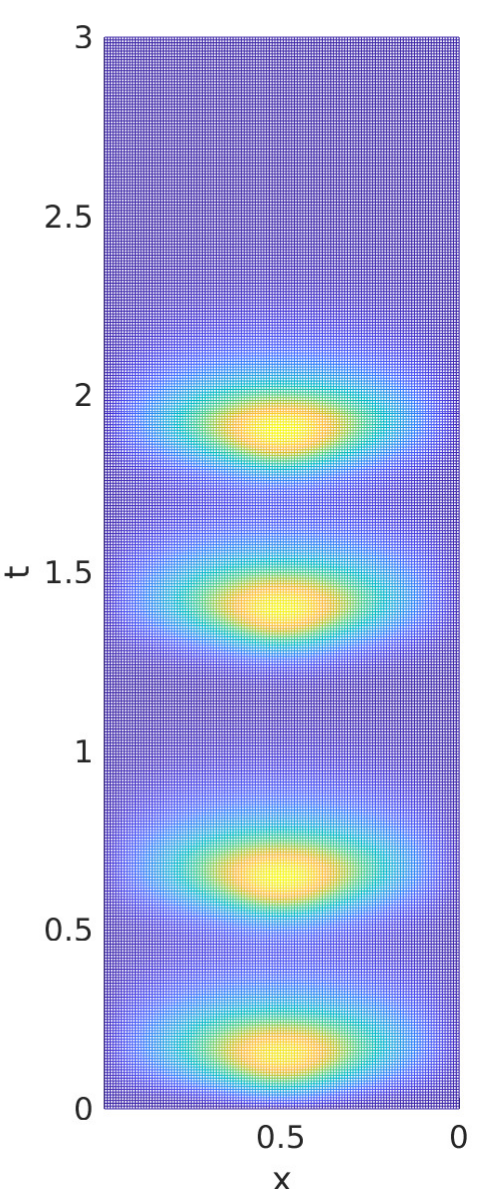}
   \includegraphics[width=0.19\textwidth,trim=300 30 310
     30,clip]{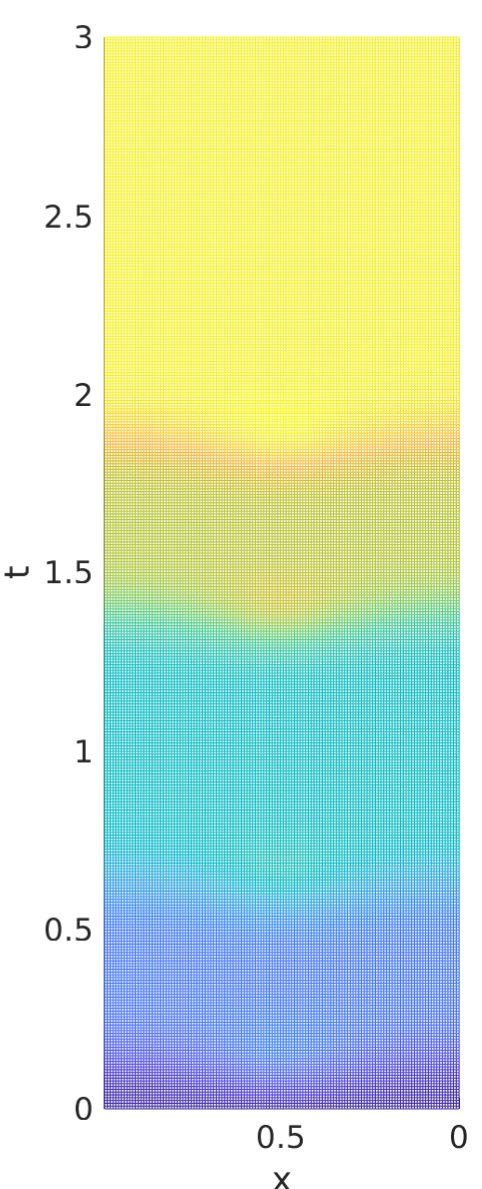}
   \includegraphics[width=0.19\textwidth,trim=300 30 310
        30,clip]{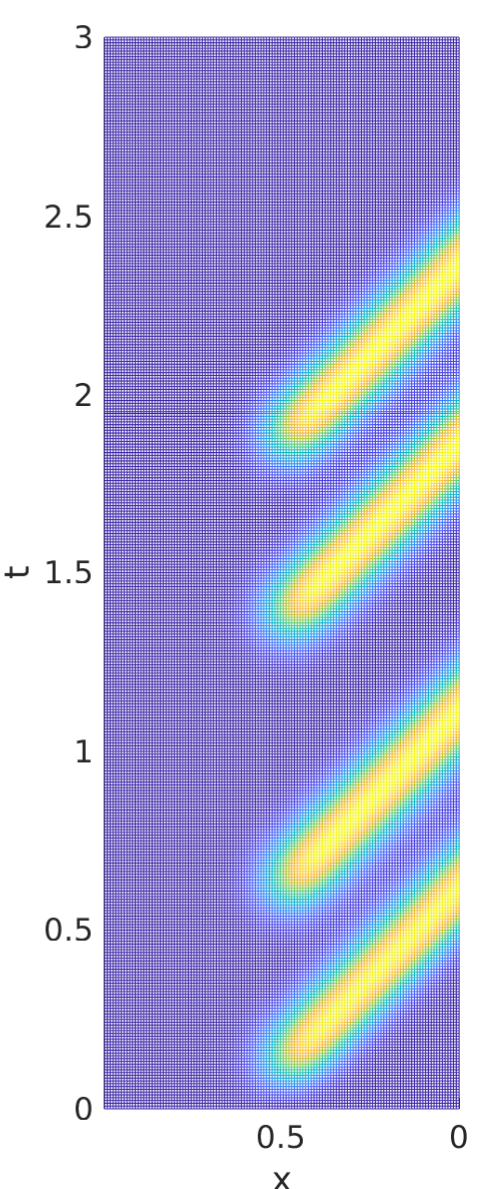}      
   \includegraphics[width=0.19\textwidth,trim=300 30 310
     30,clip]{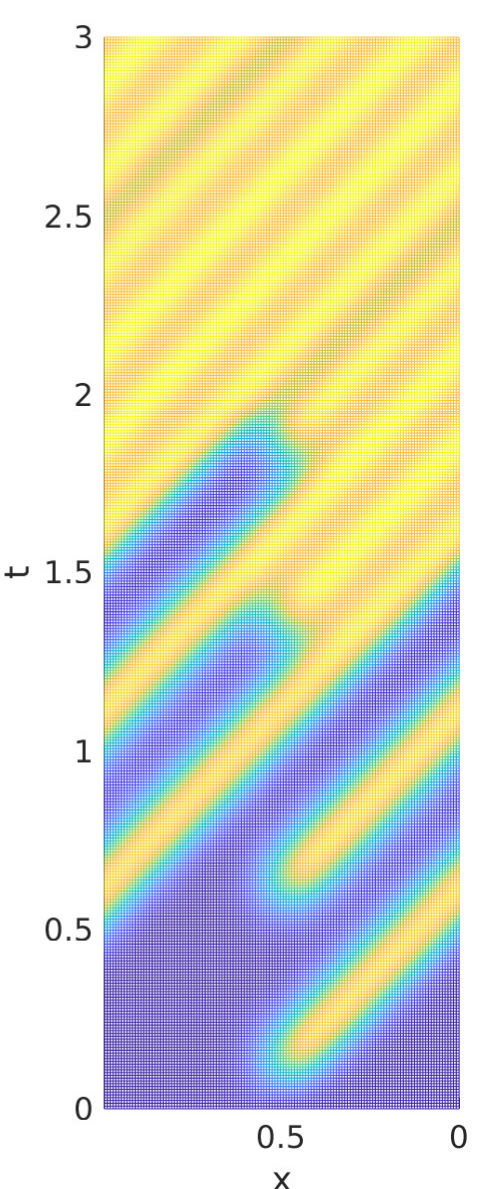}
   \includegraphics[width=0.19\textwidth,trim=300 30 310
     30,clip]{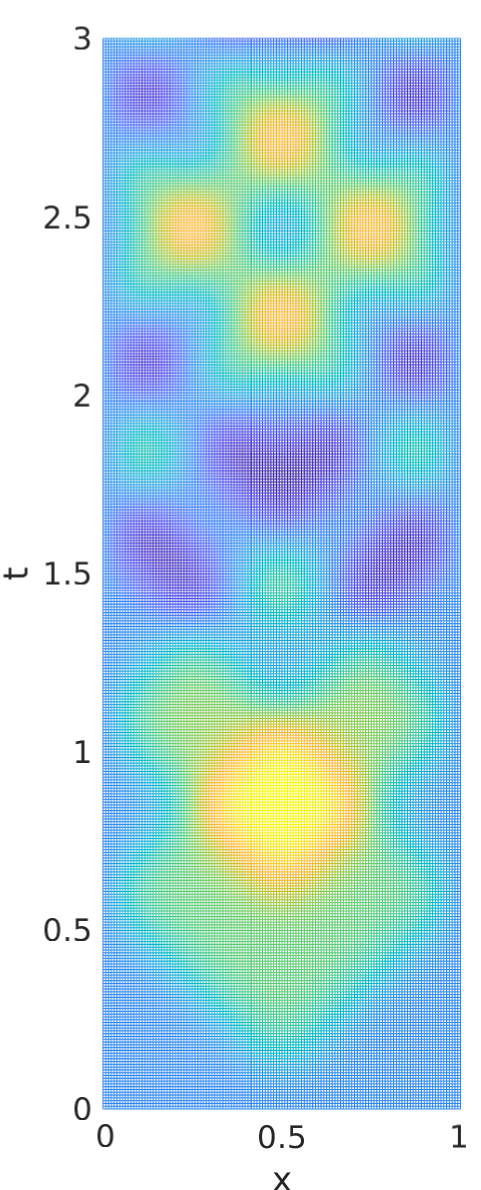}}
  \caption{From left to right: solution of the heat equation model
    problem with zero Dirichlet and zero Neumann boundary
    conditions. Solution of the corresponding advection equation with
    Dirichlet boundary condition on the left and periodic boundary
    condition, and solution of the second order wave equation with
    zero Dirichlet boundary conditions.}
  \label{SolFigs}
\end{figure}
We see that the solution with Dirichlet boundary conditions is very
much local in time: in order to compute the solution around $t=2$, we
only need to know if the right hand side function was on just before,
but not at the much earlier times $t<<2$, since the solution does not
contain this information any more. This is very different in the case
of Neumann conditions, when using the same source term: now the
solution at time $t=2$ strongly depends on the fact that the right
hand side function was on at earlier times, since the heat injected into
the system is kept, as a constant value in space. In analogy, it is
very easy to predict the temperature in the room you are currently in
a year from now, you only need to know if the heater or air-conditioner
is on and the windows and doors are open or closed then, but not
what happened with the room over the entire year before.

\section{Convergence Analysis}

In order to study our observation mathematically, it is easiest to
consider a Parareal algorithm which uses a spectral method in space
and is performing exact integrations in time for our heat equation
model problem \eqref{HeatEquation1D}. The fine propagator
$F(U_n^k,T_n,T_{n+1})$ is thus using a highly accurate spectral method
in space with $m_F$ basis functions to solve \eqref{HeatEquation1D},
and the coarse propagator $G(U_n^k,T_n,T_{n+1})$ solves the same
problem using a very cheap spectral method in space using only
$m_G<<m_F$ basis functions. Note that we allow explicitly that
$m_G=0$, which means the coarse solver is not present in the Parareal
algorithm \eqref{Parareal}! 

A spectral representation of the solution of the heat equation
\eqref{HeatEquation1D} can be obtained using separation of variables,
and considering a spatial domain $\Omega:=(0,\pi)$ to avoid to have to
carry the constant $\pi$ in all the computations (see also Remark
\ref{Remark3} later), we get
\begin{equation}
  u(x,t):=\sum_{m=1}^{\infty}\hat{u}_{m}(t)\sin(m x).
\end{equation}
Expanding also the right hand side in a sine series,
\begin{equation}
  f(x,t):=\sum_{m=1}^{\infty}\hat{f}_{m}(t)\sin(m x),
\end{equation}
and the initial condition,
\begin{equation}
  u_0(x):=\sum_{m=1}^{\infty}\hat{u}_{0,m}(t)\sin(m x),
\end{equation}
we find that the Fourier coefficients in the solution satisfy
\begin{equation}
  \partial_t\hat{u}_{m}=-m^2\hat{u}_{m}+\hat{f}_{m},\quad
    \hat{u}_{m}(0)=\hat{u}_{0,m}.
\end{equation}
The solution of this equation can be written in closed form using an
integrating factor,
\begin{equation}
  \hat{u}_{m}(t)=\hat{u}_{0,m}e^{-m^2t}
    +\int_0^t\hat{f}_{m}(\tau)e^{-m^2(t-\tau)}d\tau.
\end{equation}
If we use in the Parareal algorithm \eqref{Parareal} a spectral
approximation of the solution for the fine propagator using $m_F$
modes, we obtain 
\begin{equation}\label{FineApproximation}
  F(U_n^k,T_n,T_{n+1})=\sum_{m=1}^{m_F}\left(\hat{U}_{m,n}^k
  e^{-m^2\Delta T}+\int_{T_n}^{T_{n+1}}\hat{f}_{m}(\tau)
  e^{-m^2(t-\tau)}d\tau\right)\sin(m x),
\end{equation}
and similarly for the coarse approximation when using $m_G$ modes
\begin{equation}\label{CoarseApproximation}
  G(U_n^k,T_n,T_{n+1})=\sum_{m=1}^{m_G}\left(\hat{U}_{m,n}^k
  e^{-m^2\Delta T}+\int_{T_n}^{T_{n+1}}\hat{f}_{m}(\tau)
  e^{-m^2(t-\tau)}d\tau\right)\sin(m x).
\end{equation}
Because of the orthogonality of the sine functions, the Parareal
algorithm \eqref{Parareal} is diagonalized in this representation,
which is the main reason that we chose a spectral method, to simplify
the present analysis for this short note. For Fourier modes with $m\le
m_G$, the first and the third term in \eqref{Parareal} coincide, as we
see from \eqref{FineApproximation} and \eqref{CoarseApproximation},
and thus cancel, and hence the Parareal update formula simplifies to
\begin{equation}\label{UpdateXiSmall}
    \begin{multlined}[c][0.85\displaywidth]
        \hat{U}_{m,n+1}^{k+1}=\hat{U}_{m,n}^{k+1}
        e^{-m^2\Delta T}+\int_{T_n}^{T_{n+1}}\hat{f}_{m}(\tau)
        e^{-m^2(t-\tau)}d\tau, \\ ~\qquad\mbox{for $m=1,2,\ldots,m_G$.}
    \end{multlined}
\end{equation}
For $m>m_G$, the coarse correction term is not present in the Parareal update
formula \eqref{Parareal}, and thus only the contribution from the fine propagator remains,
\begin{equation}\label{UpdateXiLarge}
    \begin{multlined}[c][0.85\displaywidth]
        \hat{U}_{m,n+1}^{k+1}\!=\!\hat{U}_{m,n}^{k}
        e^{-m^2\Delta T}\!+\!\int_{T_n}^{T_{n+1}}\!\!\!\hat{f}_{m}(\tau)
        e^{-m^2(t-\tau)}d\tau, \\ \mbox{for $m=m_G+1,m_G+2,\ldots,m_F$.}
    \end{multlined}
\end{equation}
The only difference is the iteration index $k$ in the first term on
the right, but this term makes all the difference: for $m\le m_G$,
the update formula \eqref{UpdateXiSmall} represent the exact
integration of the corresponding mode sequentially going through the
entire time domain $(0,T)$. For $m> m_G$, the update formula
\eqref{UpdateXiLarge} represents simply a block Jacobi iteration
solving all time subintervals $(T_n,T_{n+1})$ in parallel starting
from the current approximation at iteration $k$. Using this insight,
we obtain the following convergence result.

\begin{theorem}[Parareal convergenve even without coarse propagator]\label{TH1}
  The Parareal algorithm \eqref{Parareal} for the heat equation
  \eqref{HeatEquation1D} using the spectral coarse propagator
  \eqref{CoarseApproximation} and the spectral fine propagator
  \eqref{FineApproximation} satisfies for any initial guess $U_n^0$
  the convergence estimate
  \begin{equation}\label{ConvergenceEstimate}
    \sup_n||U_n^k(\cdot)-u(\cdot,T_n)||_2\le
    e^{-(m_g+1)^2k\Delta T}\sup_n||U_n^0(\cdot)-u(\cdot,T_n)||_2,
  \end{equation}
  and this estimate also holds if the coarse propagator does not
  contain any modes, $m_G=0$, which means it is not present.
  The Parareal algorithm therefore converges also without coarse
  propagator.
\end{theorem}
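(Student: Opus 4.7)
The plan is to exploit the mode-by-mode diagonalization the authors have already derived in \eqref{UpdateXiSmall}--\eqref{UpdateXiLarge}. Since the sine basis is orthogonal on $(0,\pi)$, by Parseval's identity the squared $L^2$ error at time $T_n$ equals (up to a constant) the sum of the squared errors of the Fourier coefficients, so I would control each mode separately and then reassemble.

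First, I would dispose of the modes $m \le m_G$. The recursion \eqref{UpdateXiSmall} is exactly the variation-of-constants time-stepping for the $m$-th coefficient; combined with the correct initial datum $\hat{U}_{m,0}^{k+1} = \hat{u}_{0,m}$ (which is not iterated on in Parareal), a straightforward induction in $n$ shows $\hat{U}_{m,n}^{k+1} = \hat{u}_m(T_n)$ after a single iteration, so these modes contribute no error to the sum for any $k \ge 1$. Next, for the modes $m > m_G$ I would subtract the exact identity $\hat{u}_m(T_{n+1}) = \hat{u}_m(T_n)\, e^{-m^2 \Delta T} + \int_{T_n}^{T_{n+1}} \hat{f}_m(\tau)\, e^{-m^2 (T_{n+1}-\tau)}\, d\tau$ from \eqref{UpdateXiLarge}: this cancels the forcing term and leaves the clean block-Jacobi error recursion
\begin{equation*}
    \hat{e}_{m,n+1}^{k+1} = e^{-m^2 \Delta T}\, \hat{e}_{m,n}^{k}, \qquad \hat{e}_{m,n}^{k} := \hat{U}_{m,n}^{k} - \hat{u}_m(T_n), \qquad \hat{e}_{m,0}^{k+1} = 0.
\end{equation*}
Using the uniform bound $e^{-m^2 \Delta T} \le e^{-(m_G+1)^2 \Delta T}$ for all $m \ge m_G + 1$ and summing squares via Parseval then gives $\sum_{m > m_G} |\hat{e}_{m,n+1}^{k+1}|^2 \le e^{-2(m_G+1)^2 \Delta T} \sum_{m > m_G} |\hat{e}_{m,n}^{k}|^2$. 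Taking the supremum over $n$ and iterating $k$ times produces \eqref{ConvergenceEstimate} after extracting a square root. The case $m_G = 0$ needs no separate treatment: every mode then falls into the block-Jacobi regime, and the factor $e^{-(0+1)^2 k \Delta T} = e^{-k \Delta T}$ arises automatically from the slowest-decaying mode $m = 1$, which is precisely the observed contraction in the Dirichlet experiment.

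The subtle step I expect to be the main obstacle is the ordering of the supremum over $n$ and the sum over $m$: one cannot simply take $\sup_n$ inside the sum, so I would carry the inequality $\sum_m |\hat{e}_{m,n+1}^{k+1}|^2 \le e^{-2(m_G+1)^2 \Delta T} \sum_m |\hat{e}_{m,n}^{k}|^2$ to the supremum \emph{after} the uniform-in-$m$ rate has been extracted, relying on the fact that $\hat{e}_{m,0}^{k+1} = 0$ so the supremum on the left is not enlarged by the boundary index. A secondary point worth flagging in the write-up is the interpretation of $u(\cdot, T_n)$ when the fine propagator uses only finitely many modes $m_F$; the cleanest reading is that $u$ denotes the $m_F$-mode spectral truncation of the true solution, or equivalently that the initial guess $U_n^0$ lies in the $m_F$-mode space, so that no modes beyond $m_F$ appear in either side of \eqref{ConvergenceEstimate}.
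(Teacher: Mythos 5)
Your proposal is correct and follows essentially the same route as the paper's proof: diagonalize via the sine expansion, observe that the modes $m\le m_G$ are exact after one iteration, derive the contraction $\hat{e}_{m,n+1}^{k+1}=e^{-m^2\Delta T}\hat{e}_{m,n}^{k}$ for $m>m_G$, and reassemble with Parseval using the worst factor $e^{-(m_G+1)^2\Delta T}$. Your version is in fact slightly more careful than the paper's on two points it glosses over --- the index shift from $n$ to $n+1$ in the error recursion and the order in which the supremum over $n$ and the sum over $m$ are taken --- but these are refinements of the same argument, not a different one.
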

\begin{proof} We introduce the error in Fourier space at the
interfaces, $\hat{E}_{m,n}^k:=\hat{U}_{m,n}-\hat{U}_{m,n}^k$,
where the converged solution $\hat{U}_{m,n}$ satisfies
\begin{equation}\label{ExactSolFormula}
  \hat{U}_{m,n+1}=\hat{U}_{m,n}
  e^{-m^2\Delta T}+\int_{T_n}^{T_{n+1}}\hat{f}_{m}(\tau)
  e^{-m^2(t-\tau)}d\tau, \quad \mbox{for all $m=1,2,\ldots$.}
\end{equation}
Taking the difference with the Parareal update formula for small $m$
in \eqref{UpdateXiSmall} and using the fact that
$U_{m,0}^k=U_{m,0}=u^0$, we thus obtain a vanishing error in the
first Fourier coefficients after having performed one Parareal
iteration,
$$
  \hat{E}_{m,n}^k=0,\quad m=1,2,\ldots,m_G,\quad k=1,2,\ldots.
$$
For $m>m_G$, taking the difference between \eqref{ExactSolFormula}
and the update formula for larger $m$ in \eqref{UpdateXiLarge}, the
part with the source term $f$ cancels, and we are left with
$$
  \hat{E}_{m,n}^{k+1}=\hat{E}_{m,n}^ke^{-m^2\Delta T},\quad
   m=m_G+1,m_G+2,\ldots.
$$
Now using the Parseval-Plancherel identity, and taking the largest
convergence factor $e^{-(m_G+1)^2\Delta T}$ out of the sum, we obtain
the convergence estimate \eqref{ConvergenceEstimate} taking the sup
over all time intervals $n$.
\end{proof}
We show in Figure \ref{FigComparison} a graphical comparison of the
convergence factor of each Fourier mode without coarse propagator from
\eqref{ConvergenceEstimate}, i.e. $e^{-m^2\Delta T}$, and with coarse
propagator $\frac{|e^{-m^2\Delta T}-R_G(-m^2\Delta
  T)|}{1-|R_G(-m^2\Delta T)|}$ from \cite{gander2007analysis} for one
Backward Euler step, $R_G(z)=\frac{1}{1-z}$, as we used for Figures
\ref{PararealFigWithoutCoarse} and
\ref{PararealFigWithoutCoarseNeumann}.
\begin{figure}[t]
  \centering
  \mbox{
    \begin{overpic}[width=0.45\textwidth,trim=90 260 100 130,clip]{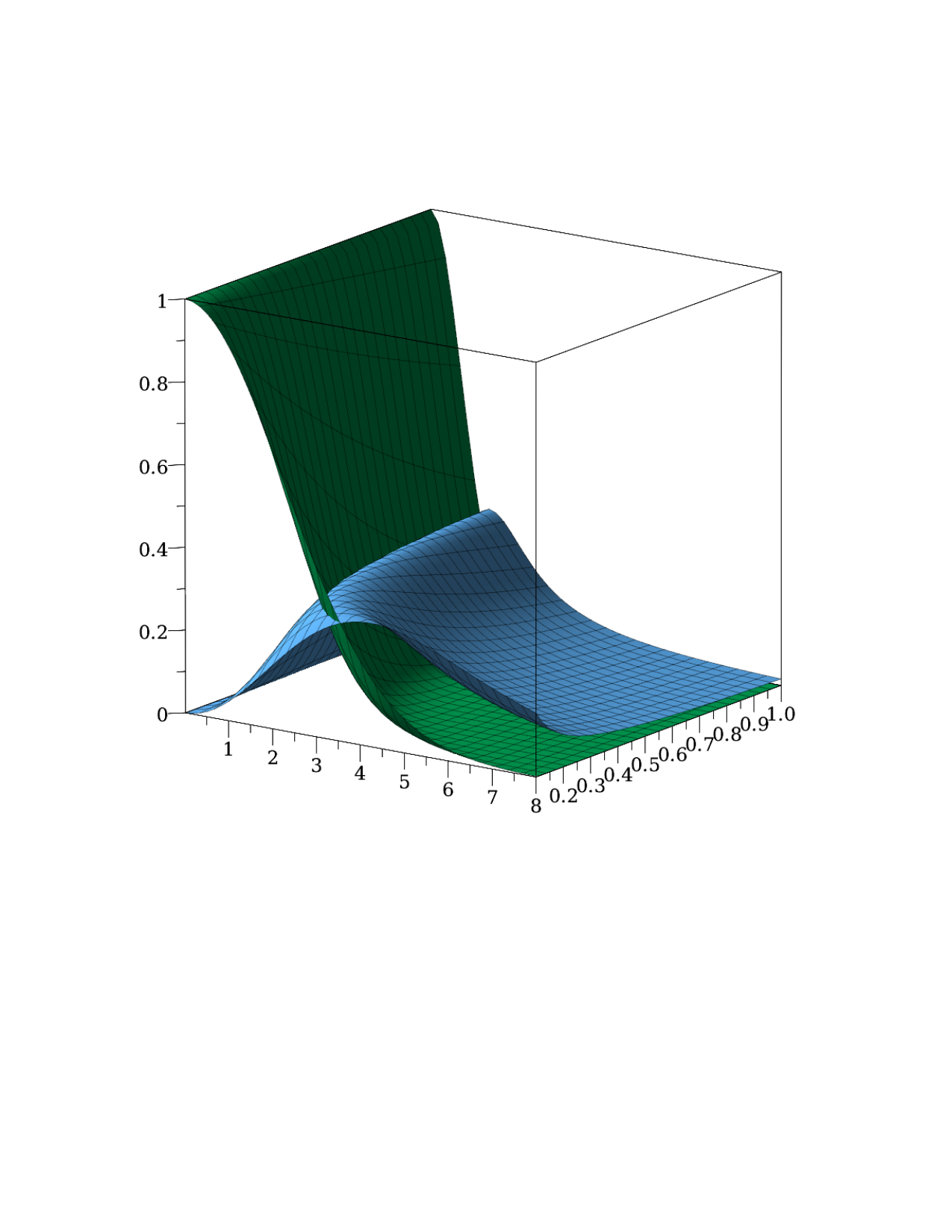}
        \put(32.8,2.5){$m$}
        \put(74.3,1.7){$\Delta T$}
    \end{overpic}
    \fbox{\begin{overpic}[width=0.5\textwidth,trim=62 363 94 55,clip]{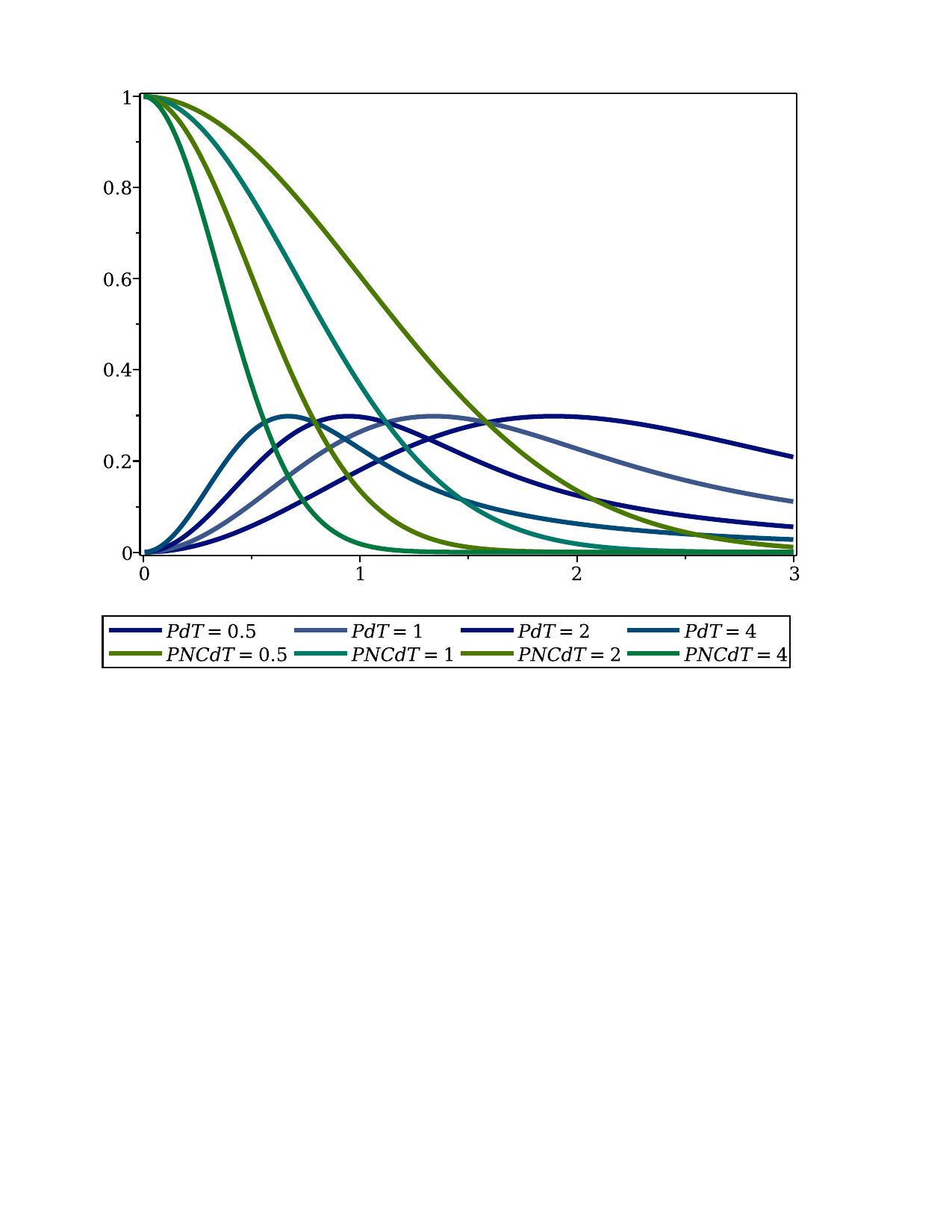}
        \put(51.5,8.5){\small $m$}
    \end{overpic}}
  }
  \caption{Contraction for each Fourier mode $m$ as function of the
    Parareal time interval length $\Delta T$ with (blue, PdT=$\Delta
    T$) and without (green, PNCdT=$\Delta T$) coarse propagator.}
  \label{FigComparison}
\end{figure}
We see that indeed there are situations where the coarse correction
with one Backward Euler step is detrimental, as observed in Figures
\ref{PararealFigWithoutCoarse} and
\ref{PararealFigWithoutCoarseNeumann}, and there is also quantitative
agreement.

\begin{remark}
  Theorem \ref{TH1} shows that the Parareal algorithm \eqref{Parareal}
  for the heat equation is scalable, provided the time interval length
  $\Delta T$ is held constant: no matter how many such time intervals
  are solved simultaneously, the convergence rate remains the same,
  and this even in the case without coarse propagator! This is like
  the scalability of one level Schwarz methods discovered in
  \cite{cances2013domain} for molecular simulations in solvent models,
  and rigorously proved in
  \cite{ciaramella2017analysis,ciaramella2018analysis,ciaramella2018analysis2}
  using three different techniques: Fourier analysis, maximum
  principle, and projection arguments in Hilbert spaces.
\end{remark}

\begin{remark}
  If we had zero Neumann boundary conditions, a similar result could
  also be derived using a cosine expansion, but then no coarse
  correction would not lead to a convergent method, since the zero
  mode would have the contraction factor one. One must therefore in
  this case have at least the constant mode in the coarse correction
  for the Parareal algorithm to converge. Otherwise however all
  remains the same.
\end{remark}

\begin{remark}\label{Remark3}
  A precise dependence on the domain size can be obtained by
  introducing the spatial domain of length $L$, $\Omega=(0,L)$. In
  this case the convergence factor simply becomes
  $e^{-\left(\frac{(m_G+1)\pi}{L}\right)^2\Delta T}$. The same
  estimate can also be obtained for a more general parabolic equation,
  the convergence factor then becomes $e^{-\lambda_{m_G+1}\Delta T}$,
  where $\lambda_{m_G+1}$ is the $m_G$ plus first eigenvalue of the
  corresponding spatial operator.
\end{remark}

\section{Conclusions}

We have shown that Parareal without a coarse propagator can converge
very well for the prototype parabolic problem of the heat equation,
because such problems ``forget'' all fine information over time, and
only very coarse information, if at all, remains, depending on the
boundary conditions. This is very different for hyperbolic problems:
while for the advection equation $u_t+au_x=f$ with Dirichlet boundary
condition, whose solution is shown in Figure \ref{SolFigs} (middle)
the information is also ``forgotten'' by leaving the domain on the
right and Parareal converges very well \cite{gander2008analysis}, also
without coarse propagator, for the advection equation with periodic
boundary conditions in the fourth plot of Figure \ref{SolFigs} this is
not the case any more: nothing is forgotten, and the coarse propagator
needs to have fine accuracy quality for the Parareal algorithm not to
fail. The situation is similar for the second order wave equation
$u_{tt}=u_{xx}$, whose solution is shown on the right in Figure
\ref{SolFigs}.\bigskip

\paragraph{Acknowledgement.} Funded by the Deutsche Forschungsgemeinschaft
(DFG, German Research Foundation) under Germany's Excellence Strategy
EXC 2044 -- 390685587, Mathematics Münster: Dynamics–Geometry–Structure
and the Swiss National Science Foundation.

\bibliographystyle{plain}
\bibliography{paper.bib}

\end{document}